\documentclass[11pt,twoside,a4paper]{article}
\usepackage{a4wide}

\usepackage{amssymb,amsmath,amsthm}

\usepackage[algosection,algoruled,vlined,nofillcomment]{algorithm2e}
\SetEndCharOfAlgoLine{}

\usepackage[colorlinks=true,citecolor=blue,urlcolor=blue,linkcolor=blue,bookmarksopen=true,unicode=true,pdffitwindow=true]{hyperref}

\usepackage{enumitem}
\usepackage{graphicx}
\usepackage{float}
\usepackage{caption}
\newcommand{\cG}{\mathcal{G}}
\newcommand{\GF}{\mathcal{G_F}}
\newcommand{\cS}{\mathcal{S}}
\newcommand{\SF}{\mathcal{S_F}}
\newcommand{\cN}{\mathcal{N}}
\newcommand{\NF}{\mathcal{N_F}}
\newcommand{\cH}{\mathcal{H}}
\newcommand{\HF}{\mathcal{H_F}}

\newtheorem{theorem}{Theorem}[section]

\newtheorem{lemma}[theorem]{Lemma}

\SetKwFunction{Enumerate}{Enumerate}
\SetKwFunction{Main}{Main}
\SetKwProg{myproc}{Procedure}{}{}

\title{The realization graph of every degree sequence has a Hamilton path}
\author{Petr Hladík, Ji\v{r}\'i Fink \footnote{E-mail: \href{mailto:hladik.petr@outlook.com}{hladik.petr@outlook.com}, \href{mailto:fink@ktiml.mff.cuni.cz}{fink@ktiml.mff.cuni.cz}} \\ \small Department of Theoretical Computer Science and Mathematical Logic\\\small Faculty of Mathematics and Physics, Charles University in Prague}
\date{}

\begin{document}
\maketitle

\begin{abstract}
Given a degree sequence $d$, the \emph{realization graph} $\GF(d)$ is the graph whose vertices are all labeled realizations of $d$, where two realizations are adjacent if they differ by a single $2$-switch.
We prove that $\GF(d)$ admits a Hamilton path for every degree sequence $d$.
The problem was initiated by Arikati and Peled (1999), who showed that $\GF(d)$ contains a Hamilton cycle whenever $d$ has majorization gap of 1.
Later, Barrus (2016) and independently M\"utze (2023) asked whether a Hamilton path or cycle exists in $\GF(d)$ for every degree sequence $d$.
As a consequence, we obtain that the interchange graph of $(0,1)$-matrices with prescribed row and column sums has a Hamilton path, thereby answering a question of Brualdi (1980).
\end{abstract}

\section{Introduction}
A sequence of non-negative integers $d = (d_1, \ldots, d_n)$ is called a \textit{degree sequence} if there exists a simple labeled graph on vertices $[n] = \{1, \ldots, n\}$ such that every vertex $i \in [n]$ has degree $d_i$; such a graph is called a \textit{realization of $d$}.
We always assume that degree sequences are in non-increasing order.
Let $\cG(d)$ denote the family of edge sets $E$ such that $([n], E)$ is a realization of $d$. We define $\cG(d)$ in terms of edge sets rather than graphs themselves for notational convenience.
The existence of a realization for a given degree sequence was first characterized by Havel~\cite{H55} and later by Erd\H{o}s and Gallai~\cite{EG60}. Havel~\cite{H55} and Hakimi~\cite{H62} provided constructive algorithms for finding a graph realization and these approaches were extended to directed graphs by Erd\H{o}s, Mikl\'os and Toroczkai~\cite{EMT10}. 
The degree sequences that contain a unique realization were studied by Chv\'atal and Hammer~\cite{CH77}; realizations of such degree sequences are called threshold graphs. For more details on this topic see the book by Mahadev and Peled~\cite{MP95}.

The symmetric difference $E \triangle F$ of sets $E$ and $F$ is the set of all elements contained in exactly one of $E$ or  $F$.
The smallest possible symmetric difference $E \triangle F$ between two distinct realizations $E$ and $F$ of same degree sequence is called $2$-switch and it replaces two edges $uv$ and $wx$ in $E$ by two edges $vw$ and $ux$ in $F$; see Figure~\ref{figure:switch}.
The flip graph $\GF(d)$, also called the \emph{realization graph}, has $\cG(d)$ as vertices, and two of them are connected by an edge if one can be obtained from the other by a single $2$-switch (see Figure~\ref{figure:gf}).
Since undoing a $2$-switch is again a $2$-switch, we regard the realization graph $\GF(d)$ as an undirected simple graph.

\begin{figure}[h]
    \centering
    \includegraphics[width=0.5\textwidth]{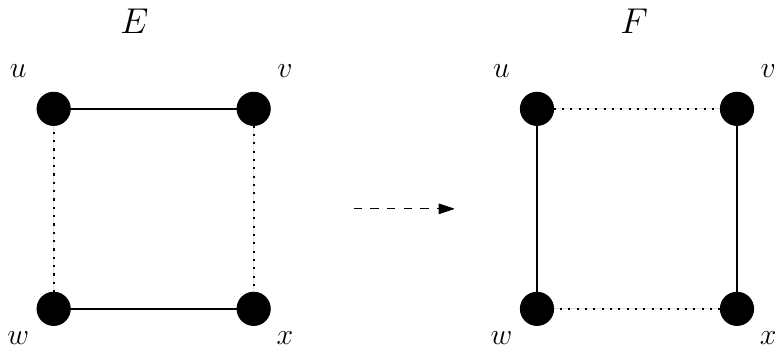}
    \caption{A $2$-switch transforming $E$ into $F$.}
    \label{figure:switch}
\end{figure}

\begin{figure}[h]
    \centering
      \includegraphics[width=0.5\textwidth]{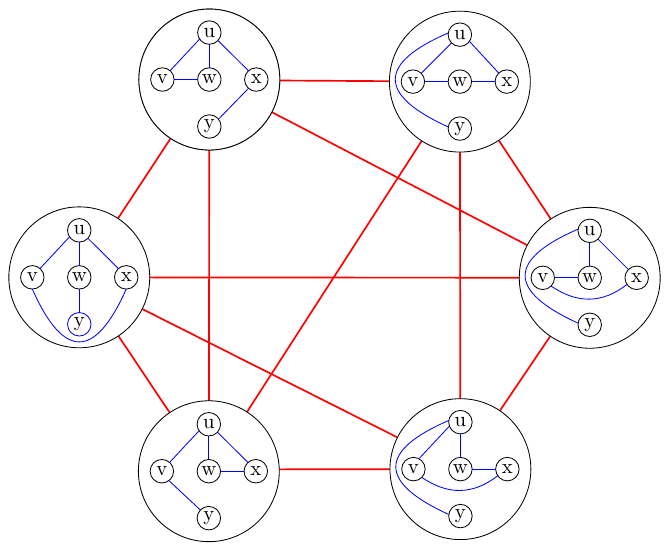}
	\caption{A realization graph $\GF((3,2,2,2,1))$. The blue edges show edges in each realization where as red edges show edges in the realization graph.}
    \label{figure:gf}
\end{figure}

The structure of $\GF(d)$ has also attracted considerable attention.
Fulkerson, Hoffman, and McAndrew~\cite{FHM65} proved connectivity of $\GF(d)$ for every degree sequence $d$.
Erd\H{o}s, Kir\'aly, and Mikl\'os~\cite{EKM13} proved that the \emph{swap distance} between realizations (i.e. the minimum number of $2$-switches required to transform realization $E$ into realization $F$) is at most $\frac{|E\triangle F|}{2}$.

Arikati and Peled~\cite{AP99} proved that $\GF(d)$ contains a Hamilton cycle whenever $d$ has a majorization gap of $1$.
Furthermore, Barrus~\cite{BA16} showed that if the realization graph is triangle-free, it must be Hamiltonian. 
More recently, Barrus and Haronian~\cite{BH23} investigated the structural conditions under which $\GF(d)$ contains a clique of size $n$.
Subsequently, Barrus~\cite{BA16} and M\"utze~\cite{mutze2022combinatorial}  posed the question of whether $\GF(d)$ has a Hamilton path or cycle for every degree sequence $d$, which we answer affirmatively.

\begin{theorem}\label{thm_final}
For every degree sequence $d$ and every realization $S \in \cG(d)$, the realization graph $\GF(d)$ has a Hamilton path starting from $S$.
\end{theorem}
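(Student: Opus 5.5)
We argue by induction on $n$, the number of vertices, and prove the stronger statement exactly as formulated: a Hamilton path exists from \emph{every} prescribed starting realization $S$. Fixing the start is what lets the pieces be chained together. The base cases are trivial: if $n\le 3$, or more generally if $d$ is a threshold sequence, then $\cG(d)$ has a single element.

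For the inductive step, focus on vertex $n$, which has the smallest degree $d_n$. Partition $\cG(d)$ according to the neighbourhood $N=N_E(n)$, a $d_n$-subset of $[n-1]$. For a fixed $N$, the realizations with $N_E(n)=N$ correspond bijectively to realizations of the residual sequence $d^N$ on $[n-1]$, obtained from $(d_1,\dots,d_{n-1})$ by decreasing $d_i$ by one for each $i\in N$. A 2-switch that avoids vertex $n$ stays inside the class and is a 2-switch of the residual realization. Hence each nonempty class induces a copy of $\GF(d^N)$, possibly after relabeling so that the sequence is non-increasing. By induction, each class has a Hamilton path from any prescribed start.

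The two properties to combine are these:
\begin{itemize}
\item Inside each class we can traverse everything from any entry point.
\item Between classes we move by a 2-switch that involves $n$, replacing edge $nu$ by edge $nw$. Such a switch uses edges $nu$ and $wx$ of $E$ and produces $nw$ and $ux$, so $N$ changes to $N-u+w$.
\end{itemize}

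Define the quotient graph $Q$ on the nonempty classes, with $N\sim N'$ when $|N\triangle N'|=2$ and some realization in class $N$ is one switch away from class $N'$. The plan is:
\begin{enumerate}
\item Show $Q$ is connected. This follows from the connectivity of $\GF(d)$ (Fulkerson--Hoffman--McAndrew), since any path in $\GF(d)$ projects to a walk in $Q$.
\item Choose a traversal order of the classes, say along a spanning tree of $Q$ handled recursively. Then concatenate the inductive Hamilton paths: traverse class $N_1$ starting at $S$, and make sure it \emph{ends} at a realization with a switch into the next class $N_2$.
\end{enumerate}

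Step 2 is the real obstacle. Induction only controls the start of each sub-path, not its end, and a tree traversal must return to parent classes, which a Hamilton path cannot do. I would therefore strengthen the induction hypothesis so that it also controls the endpoint to some degree. For example: for any start $S$ and any "target" class condition satisfiable by a realization distinct from $S$, there is a Hamilton path from $S$ whose last vertex satisfies the condition. A more workable form: the last vertex can be required to contain, or to avoid, a prescribed edge $ux$, whenever some realization other than $S$ does so. This is exactly what a transition needs, namely the presence of an edge $wx$ with $x\notin N$ and $w\notin N\cup\{n\}$.

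To avoid returning to visited classes, I would order the classes linearly rather than along a tree. Each $N$ is a $d_n$-subset, and moving $N\to N-u+w$ is a transposition-type move. Families of $k$-subsets under such exchanges have Hamilton paths (revolving-door/Gray-code orders), and the nonempty classes should form a downward-closed family with respect to the majorization order on subsets, since neighbourhoods toward higher-degree vertices are "easier". I would prove that this family admits a Gray code in which consecutive sets differ by one exchange, with each exchange realizable by a switch. I would then verify the strengthened endpoint condition inside each residual sequence so the switches can be chained.

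I expect the main difficulty to be the bookkeeping of the endpoint condition through the recursion, in particular the degenerate classes of size one or two where the endpoint cannot be freely chosen. These would need separate case analysis, likely using the threshold structure of such residual sequences.
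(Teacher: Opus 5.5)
Your framework matches the paper's: partition by the neighbourhood of one vertex, treat each class as a copy of $\GF(d^N)$, and chain the classes by switches through the pivot. But there is a genuine gap: you leave open exactly the step that carries the proof, namely controlling where each sub-path ends.

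\textbf{The invariant.} The invariant you suggest does not fit the transitions. Moving from class $N$ to $N-u+w$ needs a realization with $wx\in E$ and $ux\notin E$ for some free $x$; the requirement is $ux\notin E$, not $x\notin N$. That is a two-edge condition, and containing or avoiding one prescribed edge does not supply it. The paper's invariant is Lemma~\ref{lem:final}: if some realization is $(a,b)$-swappable, there is a Hamilton path from any $S$ ending at an $(a,b)$-swappable realization. Lemma~\ref{ab-swap} shows every class $\cH(d,\mu,A_i)$ contains an $(a_i,b_i)$-swappable realization. So induction on the residual sequence ends each sub-path where the switch into $A_{i+1}$ is available, and one- or two-element classes need no separate treatment.

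\textbf{Choice of pivot.} The idea you are missing is how to re-establish this invariant for the whole path. With the pivot fixed as vertex $n$, nothing in your plan does it. If every realization meeting the end condition has the same neighbourhood of $n$ as $S$, and other classes exist, a class-by-class traversal starting in $S$'s class cannot end correctly. The paper chooses the pivot adaptively (Lemma~\ref{lemma:u}): either every realization is $(\mu;a,b)$-adjacent, or $S$ is not and some $E$ is. Since $(\mu;a,b)$-adjacency depends only on the neighbourhood of $\mu$, one can then fix a last class $A_k$ all of whose realizations are valid endpoints. This class is distinct from $A_1=N_S(\mu)$ when there is more than one class; in the second case take $A_k=N_E(\mu)$. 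The end condition thus becomes a condition on the last class alone.

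\textbf{Structure of the class graph.} This reduction needs a Hamilton path in the class graph with both ends prescribed, $A_1$ and $A_k$. A Gray code is not enough; you need Hamilton-connectivity, and you must also know that every exchange between classes is realizable by a switch (Lemma~\ref{ab-swap} again). You only conjecture downward closure (Lemma~\ref{left_leaning}). The paper additionally proves that $\cN(d,\mu)$ has a unique maximum $M$ (Lemma~\ref{minimal}). Hence $\cN(d,\mu)$ is exactly the principal ideal $\{A : A\le M\}$ and $\NF(d,\mu)\cong\SF(n-1,\sigma(M))$. It then proves $\SF(n,M)$ Hamilton-connected by a separate induction (Theorem~\ref{HC-SF}). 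Finally, the connectivity of your quotient graph via Fulkerson--Hoffman--McAndrew plays no role in the argument.
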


Closely related to the realization graph is the \emph{interchange graph} $\mathcal{A_F}(R,C)$ introduced by Brualdi~\cite{BR80} (see Figure~\ref{figure:af}) for $(0,1)$-matrices with prescribed row and column sums, a class whose existence was characterized by Gale~\cite{Gale57} and Ryser~\cite{Ryser57}.
Let $R = (r_1, \dots, r_m)$ and $C = (c_1, \dots, c_n)$ be vectors of non-negative integers, and let $\mathcal{A}(R, C)$ denote the set of all $(0,1)$-matrices of size $m \times n$ with row and column sums given by $R$ and $C$, respectively.
The graph $\mathcal{A_F}(R,C)$ has the matrices in $\mathcal{A}(R, C)$ as its vertices; two matrices are adjacent if one can be transformed into the other by a single $2 \times 2$ interchange of the form
\[
\begin{pmatrix} 1 & 0 \\ 0 & 1 \end{pmatrix} \leftrightarrow \begin{pmatrix} 0 & 1 \\ 1 & 0 \end{pmatrix}.
\]

Ryser~\cite{Ryser57} and Gale~\cite{Gale57} presented connectivity of $\mathcal{A_F}(R,C)$.
Chen, Guo and Zhang~\cite{CGZ88} proved that the edge-connectivity of $\mathcal{A_F}(R,C)$ equals its minimum degree.

For two antipodal matrices $A, B \in \mathcal{A}(R,C)$, i.e., matrices satisfying $A+B$ is the all-ones matrix, Yuster~\cite{YU05} proved an upper bound of $\frac{1+\sqrt{2}}{4+\sqrt{8}}mn(1+o(1)) \approx 0.354mn(1+o(1))$ on the distance between $A$ and $B$ in $\mathcal{A_F}(R,C)$.
As in realization graph the swap distance between two $(0,1)$-matrices $A$ and $B$ is at most $\frac{|A \triangle B|}{2}$. 
The diameter of $\mathcal{A_F}(R,C)$ was also investigated in a series of papers.
Brualdi~\cite{BR80} gave the initial upper bound of $\frac{mn}{2}-1$, which Qian~\cite{QI99,QI02} improved to $\frac{mn}{2}-\frac{m}{2}\ln(\frac{n+2}{4})$, and Brualdi and Shen~\cite{BS02} further lowered to $\frac{(3+\sqrt{17})mn}{4} \approx 0.445mn$.

In~\cite{BR80}, Brualdi raised the question whether the interchange graph admits a Hamilton path or cycle.
For the special case when $R$ or $C$ are all-ones vectors, Li and Zhang~\cite{LZ94} proved that $\mathcal{A_F}(R,C)$ contains a Hamilton path.

\begin{figure}[h]
    \centering
      \includegraphics[width=0.5\textwidth]{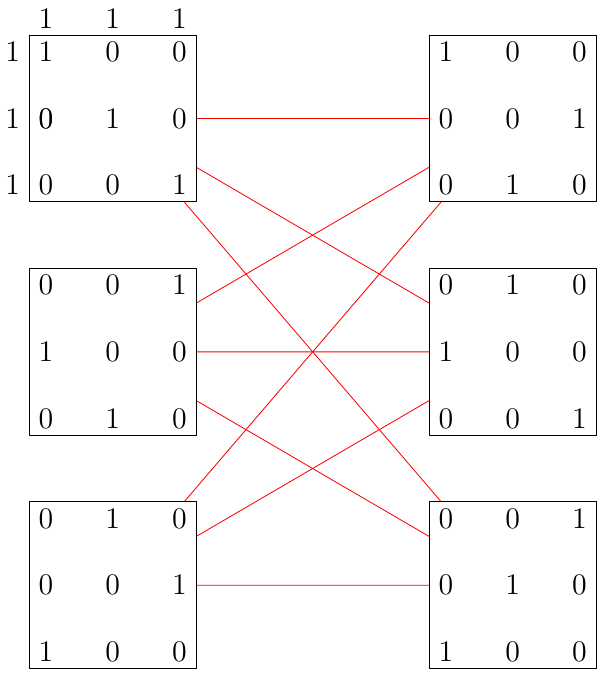}
	  \caption{An interchange graph $\mathcal{A_F}((1,1,1),(1,1,1))$}
    \label{figure:af}
\end{figure}

Arikati and Peled~\cite{AP99} pointed out that Brualdi's problem can be rephrased equivalently using degree sequences as follows (see Figure~\ref{figure:embedding}).
We can interpret an $m \times n$ matrix with row and column sums $R$ and $C$, respectively, as the biadjacency matrix of a bipartite graph with partition classes of sizes $m$ and $n$.
We then turn the bipartite graph into a split graph by making one partite set a clique.
Note that the degree sequence $d$ corresponding to the resulting split graph yields a flip graph $\GF(d)$ isomorphic to $\mathcal{A_F}(R, C)$.
This is because in a split graph, every flip operation removes and adds two edges that go between the independent set and the clique.
As a direct consequence of Theorem~\ref{thm_final} and the embedding technique introduced by Arikati and Peled~\cite{AP99}, we answer the question of the existence of a Hamilton path affirmatively.

\begin{figure}[h]
    \centering
      \includegraphics[width=0.9\textwidth]{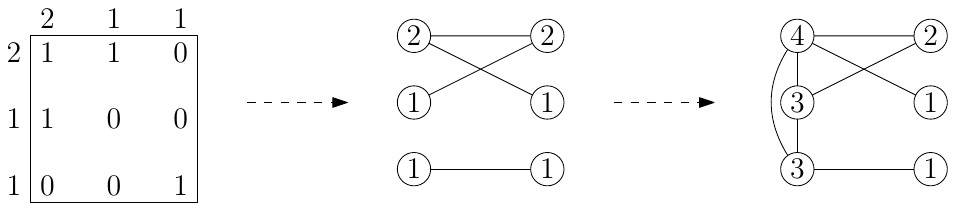}
	  \caption{An illustration of conversion of a matrix in $\mathcal{A}((2,1,1),(2,1,1))$ to a realization in $\cG((4,3,3,2,1,1))$.}
    \label{figure:embedding}
\end{figure}
   
\begin{theorem}\label{ber_final}
    For any non-negative integral vectors $R$ and $C$, the interchange graph $\mathcal{A_F}(R, C)$ contains a Hamilton path starting from any matrix $A \in \mathcal{A}(R, C)$.
\end{theorem}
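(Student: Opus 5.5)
The plan is to derive Theorem~\ref{ber_final} directly from Theorem~\ref{thm_final} through the split-graph embedding of Arikati and Peled~\cite{AP99} described above. Given $R=(r_1,\dots,r_m)$ and $C=(c_1,\dots,c_n)$ with $\mathcal{A}(R,C)\neq\emptyset$ (otherwise there is nothing to prove), we build a split graph as follows. Vertices $u_1,\dots,u_m$ correspond to rows and form a clique. Vertices $w_1,\dots,w_n$ correspond to columns and form an independent set. We join $u_i$ to $w_j$ exactly when $A_{ij}=1$. The degree of $u_i$ is $m-1+r_i$ and the degree of $w_j$ is $c_j$. After sorting, this gives a degree sequence $d$ that does not depend on $A$. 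We thus obtain an injective map $\varphi:\mathcal{A}(R,C)\to\cG(d)$.

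The key step is to show that $\varphi$ is an isomorphism $\mathcal{A_F}(R,C)\cong\GF(d)$. For this we need two facts.
\begin{enumerate}
\item $\varphi$ is surjective, i.e.\ every realization of $d$ has the clique on the $u$'s and no edges among the $w$'s.
\item $2$-switches in realizations correspond exactly to $2\times 2$ interchanges.
\end{enumerate}
A careless construction can fail surjectivity. For instance, if $m=1$ the "clique" is empty, and a $u$ could trade roles with a $w$ of equal degree, since labels are fixed while degrees may coincide. To avoid such problems we use the standard rigidity trick: pad the construction so the split partition is forced. One way is to add a large clique boost. Alternatively, take $m' = m+k$ row vertices, where the $k$ extra rows are adjacent to all $w$'s and all $u$'s, chosen so that the Erdős–Gallai inequality at index $m'$ is tight. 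Tightness forces every realization to have the first $m'$ vertices forming a clique and the rest forming an independent set. The extra rows are full in every realization, so they never take part in a switch, and the interchange graph is unchanged.

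The cleanest route is to verify tightness directly. Set $D=\sum_{i} d_{u_i}$ over the clique side. Every realization must then satisfy
\[ D \le m'(m'-1) + \sum_j \min(d_{w_j}, m'), \]
with equality, which forces all clique edges and all $w$–$u$ adjacencies. Consequently no $w$–$w$ edges exist.

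With surjectivity in hand, the switch correspondence is immediate. Any $2$-switch in a split realization must remove and add only $u$–$w$ edges, because clique edges are present in every realization and $w$–$w$ edges are absent from every realization. A switch replacing $u_iw_j,u_{i'}w_{j'}$ by $u_iw_{j'},u_{i'}w_j$ is exactly a $2\times2$ interchange on rows $i,i'$ and columns $j,j'$, and conversely. Hence $\varphi$ is a graph isomorphism. Applying Theorem~\ref{thm_final} with $S=\varphi(A)$ gives a Hamilton path of $\GF(d)$ starting at $\varphi(A)$, and pulling it back through $\varphi^{-1}$ gives the required Hamilton path in $\mathcal{A_F}(R,C)$ starting at $A$.

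The main obstacle is making sure the split partition is forced in every realization, i.e.\ that $\varphi$ is surjective. This is where we invoke or reprove the Arikati–Peled observation, adding universal padding vertices if needed to achieve Erdős–Gallai tightness. The remaining steps are routine.
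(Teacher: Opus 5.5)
Your proposal is correct and follows the paper's route: the Arikati--Peled split-graph embedding combined with Theorem~\ref{thm_final}. Your Erd\H{o}s--Gallai tightness argument usefully supplies the justification, left implicit in the paper, that every labeled realization has $\{u_1,\dots,u_m\}$ as a clique and $\{w_1,\dots,w_n\}$ as an independent set. The padding step is unnecessary: since $c_j \le m$, evaluating on $\varphi(A)$ already gives equality in the inequality for the labeled set $\{u_1,\dots,u_m\}$, so your worry about $m=1$ does not arise.
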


Beyond pure combinatorics, $\GF(d)$ serves as the state space of the \emph{switch Markov chain}, the dominant tool for sampling uniformly random graphs with a prescribed degree sequence (which is used for example in the study of large social networks).
The efficiency of this sampling method hinges on the \emph{mixing time} of the random walk, which reflects how quickly the chain approaches its stationary distribution.
Greenhill~\cite{Gr15} provided the first systematic analysis of this mixing time, proving rapid mixing for degree sequences with minimum degree $1$ and maximum degree $d_{\max}$ that satisfy $3 \leq d_{\max} \leq \frac14 \sqrt{M}$ where $M$ is sum of all degrees.
Amanatidis and Kleer~\cite{AK20} subsequently refined and extended these results to broader families of degree sequences, including the so-called strongly stable sequences, a notion closely related to $P$-stability.
These works underscore that the expansion, connectivity, and traversal properties of $\GF(d)$ are not merely of theoretical interest, but directly determine the computational feasibility of sampling and approximate counting of realizations.

Similarly, the constructions of $(0,1)$-matrices with prescribed rows and column sums presented by Brualdi~\cite{BR80} found its application in operational research; see e.g.~\cite{SV23}.

Flip graphs were also studied for binary strings, permutations of a given set, subsets of a set, matchings, spanning trees other combinatorial structures see~\cite{ni18,Sa97, MR3444818}.

\subsection{Overview of our proof}
We construct a Hamilton path in $\GF(d)$ by induction on $n$.
Let the neighborhood of vertex $u$ be the set of all its neighbors in the given graph.
In the induction step, we choose a pivot vertex $\mu$ (see Lemma~\ref{lemma:u}) and partition $\cG(d)$ according to the neighborhood of $\mu$.
Formally, let $\cN(d,\mu)$ denote the family of all subsets $A \subseteq [n] \setminus \{\mu\}$ that form the neighborhood of $\mu$ in some realization of the degree sequence $d$.
The partition of $\cG(d)$ for a neighborhood $A \in \cN(d,\mu)$ consists of all realizations of $d$ in which the neighborhood of $\mu$ is precisely $A$; this subset is denoted by $\cH(d,\mu,A)$.
If we remove vertex $\mu$ from every realization in $\cH(d,\mu,A)$, we obtain a set of graphs whose reduced degree sequence $d'$, derived from $d$ by removing $d_\mu$ and decrementing the degrees of all vertices in $A$ by one.
We define $\HF(d,\mu,A)$ as the induced subgraph of $\GF(d)$ on the vertex set $\cH(d, \mu,A)$.
In order to apply induction, observe that $\HF(d,\mu,A)$ is isomorphic to $\GF(d')$.

We need to interconnect the Hamilton paths of each $\HF(d,\mu,A)$ for all $A \in \cN(d,\mu)$, so we need to study $\cN(d,\mu)$.
For a set $X \subseteq [n]$ and integers $a, b \in [n]$, the notation $X \triangle \{a,b\}$ is used exclusively when $a \in X$ and $b \notin X$, representing the replacement of $a$ by $b$ in $X$.
Two realizations $G \in \cH(d,\mu,A)$ and $F \in \cH(d,\mu,B)$ for $A,B \in \cN(d,\mu)$ can be neighbors only if $A$ and $B$ differ by an exchange of two elements.
We therefore define the \emph{neighborhood flip graph} $\NF(d,\mu)$ as the graph where $A, B \in \cN(d,\mu)$ are adjacent if $A = B \triangle \{a,b\}$ for some $a,b$.
We need to construct a Hamilton path in $\NF(d,\mu)$, hence we describe the flip graph using the following definition.

Let $\binom{[n]}{k}$ denote the family of all $k$-element subsets of $[n]$.
For set $A \subset [n]$ we denote by $A_i$ the $i$-th smallest integer present in $A$.
For any two sets $A,B \in \binom{[n]}{k}$, we define a partial ordering $A \le B$ if $A_i \le B_i$ for every $i \in [k]$.
For a given set $M \in \binom{[n]}{k}$, we define $\cS(n,M)$ as the family of all sets $A \in \binom{[n]}{k}$ such that $A \le M$.
The \emph{subset flip graph} $\SF(n,M)$ is defined on the vertex set $\cS(n,M)$, and two sets $A$ and $B$ are adjacent if $A = B \triangle \{a, b\}$ for some $a,b \in [n]$.
In Theorem~\ref{thm:ham} we prove that $\NF(d,\mu)$ is isomorphic to $\SF(n-1,M)$ for some $M \in \binom{[n-1]}{\mu}$ constructed by Lemma~\ref{minimal}. Theorem~\ref{HC-SF} shows that both graphs are Hamilton-connected, i.e., there exists a Hamilton path between $A$ and $B$ for every distinct pair $A, B \in \cN(d,\mu)$.

For three distinct vertices $a,b,u$, we say that a realization $E$ is $(u;a,b)$-adjacent if $E$ contains edge $au$ and avoids the edge $bu$.
Furthermore, a realization is $(a,b)$-swappable if it is $(u;a,b)$-adjacent for some vertex $u$ distinct from $\mu$.
For $A,B \in \cN(d,\mu)$ with $B = A \triangle \{ a,b \}$, by lemma~\ref{ab-swap} the set $\cH(d,\mu,A)$ has a $(a,b)$-swappable realization.
Note that performing a $2$-switch along cycle $\mu, a, u, b$ in $(u;a,b)$-adjacent realization provides a realization in $\cH(d,\mu,B)$. 
Therefore, the following lemma constructs a Hamilton path with restricted end-vertices to ensure that Hamilton paths in all partitions of $\cG(d)$ can be interconnected into a single Hamilton path of $\cG(d)$.

\begin{lemma}\label{lem:final}
For every degree sequence $d$, every realization $S \in \cG(d)$, and every two vertices $a,b$ if $\cG(d)$ has an $(a,b)$-swappable realization, then $\GF(d)$ has a Hamilton path from $S$ to an $(a,b)$-swappable realization.
\end{lemma}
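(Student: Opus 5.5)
We prove the statement by induction on $n$, following the scheme of the overview. We use the pivot $\mu$ from Lemma~\ref{lemma:u}, the identification of each class $\HF(d,\mu,A)$ with $\GF(d')$, and the Hamilton-connectivity of $\NF(d,\mu)\cong\SF(n-1,M)$ from Theorems~\ref{thm:ham} and~\ref{HC-SF}. The base case is small $n$, where $\GF(d)$ has a single vertex or is trivially handled; a single realization that is $(a,b)$-swappable is itself the required path.

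\textbf{Step 1: order the classes.} Let $A^S$ be the neighborhood of $\mu$ in $S$. Pick a target class $A^T\in\cN(d,\mu)$ that contains an $(a,b)$-swappable realization, where swappability is witnessed by a vertex $u\neq\mu$. Where possible, choose $A^T\neq A^S$. By Hamilton-connectivity, fix a Hamilton path $A^S=A^1,A^2,\dots,A^m=A^T$ in $\NF(d,\mu)$. For consecutive classes write $A^{i+1}=A^i\triangle\{a_i,b_i\}$. Lemma~\ref{ab-swap} gives that $\cH(d,\mu,A^i)$ contains an $(a_i,b_i)$-swappable realization.

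\textbf{Step 2: traverse the classes.} We build Hamilton paths inside the classes one after another. In class $i$ we start at a realization $S_i$, with $S_1=S$. We apply the induction hypothesis to the reduced sequence $d'$ of $\HF(d,\mu,A^i)\cong\GF(d')$, with the vertex pair $(a_i,b_i)$. This gives a Hamilton path of the class ending at some $E_i$ that is $(u;a_i,b_i)$-adjacent in $d$ for some $u\ne\mu$.

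To use this we need two facts.
\begin{itemize}
\item Swappability in the reduced graph (with $\mu$ deleted) is the same as swappability in $d$ with witness $u\neq\mu$, because the edges $ua_i$ and $ub_i$ do not involve $\mu$.
\item The induction hypothesis requires the reduced class to contain a swappable realization, and this is exactly what Lemma~\ref{ab-swap} supplies.
\end{itemize}
Then the $2$-switch on the cycle $\mu,a_i,u,b_i$ maps $E_i$ to a realization $S_{i+1}\in\cH(d,\mu,A^{i+1})$, which becomes the start of the next class.

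\textbf{Step 3: the last class.} In class $m=T$ we again invoke induction with the pair $(a,b)$. The hypothesis is satisfied because $T$ was chosen to contain an $(a,b)$-swappable realization. A subtlety arises when the witness vertex is the pivot. If $a,b\neq\mu$, then any witness $u\ne\mu$ of swappability in $d$ is a witness in $d'$ as well, so the endpoint is $(a,b)$-swappable in $d$.

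\textbf{Main obstacle.} The hard part is the choice of the target class together with the special cases.
\begin{itemize}
\item \emph{Case $\mu\in\{a,b\}$.} Here swappability is a condition on the neighborhood of the pivot, not on the reduced graph. We must show that the final class can be entered appropriately, or that its whole Hamilton path ends suitably. If $\mu=a$, the condition "some $u\neq\mu$ has $u\mu\in E$ and $ub\notin E$" depends only on $A^T$ and on the edges at $b$. We would try to choose $\mu$ (via the freedom in Lemma~\ref{lemma:u}) so that $\mu\notin\{a,b\}$. Failing that, we would choose $A^T$ with some $u\in A^T$, and then use induction on the last class with a pair guaranteeing $ub\notin E$.
\item \emph{Case $m=1$.} Only one class exists, so $A^S=A^T$ is forced. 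In this case the start and end lie in the same class, and induction handles it directly.
\end{itemize}
I expect the case analysis ensuring that the target class contains an $(a,b)$-swappable realization, especially when $a$ or $b$ equals $\mu$, to be the crux of the proof.
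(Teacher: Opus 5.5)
\textbf{There is a genuine gap in how you choose the last class.} Your Step~2 (applying induction to $d'$ with the pair $(a_i,b_i)$, using Lemma~\ref{ab-swap}, then switching along $\mu,a_i,u,b_i$) matches the paper. Step~1, however, requires the last class to contain a realization that is $(a,b)$-swappable with a witness $u\neq\mu$, so that Step~3 can invoke induction with the pair $(a,b)$. Such a class may exist only as the start class $A^S$, and then no Hamilton path of $\NF(d,\mu)$ can end there.

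Here is a concrete failure. Take $d=(1,1,1,1)$ on $\{1,2,3,4\}$, with $a=1$, $b=2$ and $S=\{14,23\}$.
\begin{itemize}
\item Lemma~\ref{lemma:u} forces $\mu=3$. Vertex $4$ fails both conditions: $S$ is $(4;1,2)$-adjacent, and $\{13,24\}$ is not.
\item The neighborhoods $\{1\},\{2\},\{4\}$ of $3$ give three classes, containing $\{13,24\}$, $S$ and $\{12,34\}$ respectively.
\item The only realization that is $(1,2)$-swappable with a witness other than $3$ is $S$ itself, with witness $4$.
\end{itemize}
So your target is forced to be $A^S$ while $m=3$, and Step~1 cannot be carried out. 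Your case $m=1$ does not cover this, and your plan for $\mu\in\{a,b\}$ never shows how a suitable target class is guaranteed.

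\textbf{The missing idea is to let the pivot itself be the witness.} Choose $\mu$ by Lemma~\ref{lemma:u}, and let the last class $A_k$ be a neighborhood of $\mu$ that contains $a$ and avoids $b$.
\begin{itemize}
\item In the Swappable-edge case, take $A_k$ to be the neighborhood of $\mu$ in $E$.
\item In the Fixed-neighbor case, every class qualifies, so pick any $A_k\neq A_1$ (or the single class if $k=1$).
\end{itemize}
Then every realization in $\cH(d,\mu,A_k)$ is $(\mu;a,b)$-adjacent. So the last class only needs an unconstrained Hamilton path from $S_k$, and the end-vertex is automatically swappable. The Swappable-edge condition says $S$ is not $(\mu;a,b)$-adjacent, which is exactly what guarantees $A_k\neq A_1$; that is the guarantee your version lacks.

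In the example this gives the path $S,\{12,34\},\{13,24\}$, ending in a $(3;1,2)$-adjacent realization. It also makes your worry about $\mu\in\{a,b\}$ moot, since $(\mu;a,b)$-adjacency presupposes three distinct vertices.
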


\begin{figure}[h]
    \centering
    \includegraphics[width=0.8\textwidth]{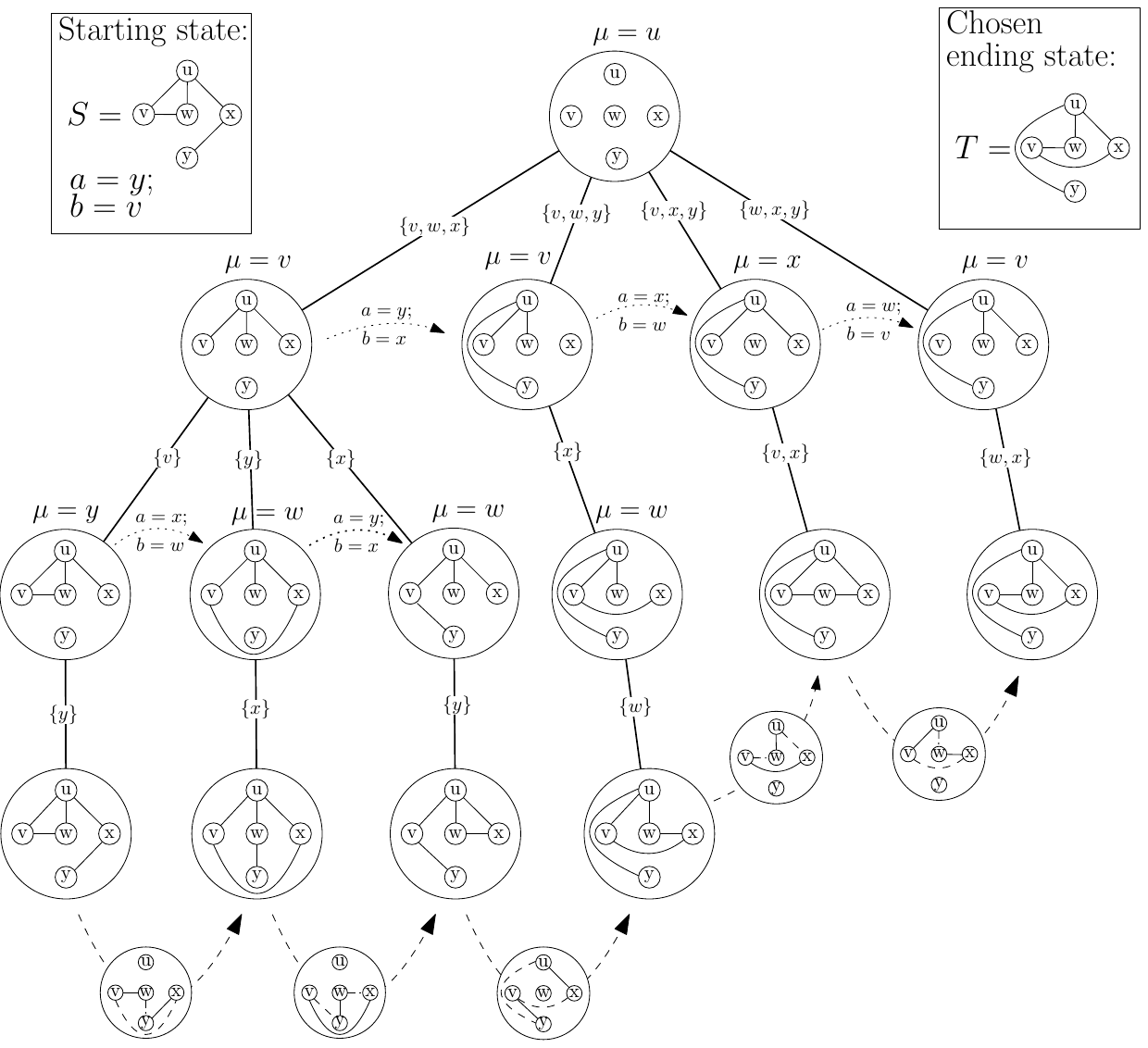}
	\caption{Induction tree for the construction of a Hamilton path in $\GF(d)$. Internal nodes correspond to choices of the pivot $\mu$; leaves correspond to individual realizations. Dotted arrows show transitions between neighborhoods of $\mu$; dashed arrows indicate connecting $2$-switches.}
    \label{fig:induction_tree}
\end{figure}
The conceptual framework of our inductive construction is illustrated in Figure~\ref{fig:induction_tree}.
The figure represents the realization graph $\GF(d)$ for degree sequence $d = (3,2,2,2,1)$, labeled $u,v,w,x,y$, respectively starting from $S = \{ uv, uw, ux, vw, xy\}$ and given $a = v$ and $b = y$, as a hierarchical structure where each level corresponds to an inductive step.
The internal nodes of the tree signify the choice of a pivot vertex $\mu$, while the edges branching from a node represent the partition of the realizations into subsets $\cH(d,\mu,A)$ according to the possible neighborhoods of $\mu$.
The dotted arrows between siblings at the same level highlight the 'bridges' we construct between these partitions; they correspond to the transitions in the neighborhood flip graph $\NF(d,\mu)$.
Finally, the leaves of the tree represent the individual realizations, where the dashed arrows indicate the specific $2$-switches that interconnect the locally constructed paths into a single global Hamilton path.

\section{Hamilton connectivity of subset flip graphs}
In this section, we study the Hamilton connectivity of the subset flip graph $\SF(n,M)$.
Let $[j,n]$ be the set $\{j, j+1, \ldots, n\}$.

\begin{theorem}\label{HC-SF}
For every positive integer $n$ and subset $M \subseteq [n]$, the subset flip graph $\SF(n,M)$ is Hamilton-connected.
\end{theorem}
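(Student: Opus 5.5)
We prove the statement by induction on $n$, with an inner case analysis on the position of $n$ relative to $M$. Let $k=|M|$. The base cases are $k=0$ and $\cS(n,M)=\{M\}$ (a single vertex), which are trivially Hamilton-connected. If $n\notin M$, then no $A\le M$ contains $n$, so $\SF(n,M)=\SF(n-1,M)$ and induction applies. So we assume $n\in M$, i.e.\ $M_k=n$. Let $j$ be maximal such that $[j,n]\subseteq M$, so that $M=M'\cup[j,n]$ with $j-1\notin M'$.

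We partition $\cS(n,M)$ by whether $n$ belongs to the set. The sets avoiding $n$ form $\cS(n-1,M^-)$, where $M^-$ is obtained from $M$ by shifting the block $[j,n]$ down to $[j-1,n-1]$; this is the maximal element of $\binom{[n-1]}{k}$ below $M$. The sets containing $n$ correspond, after deleting $n$, to $\cS(n-1,M\setminus\{n\})$. Both parts induce subset flip graphs on smaller ground sets, so by induction they are Hamilton-connected (when nonempty and of size at least two; singletons are handled separately). Between the parts, $A\ni n$ is adjacent to $(A\setminus\{n\})\cup\{x\}$ for every $x\notin A$ with $x\le n-1$. The result of such a swap is automatically $\le M^-$, since replacing the maximum by something smaller keeps the domination. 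Hence there are many cross edges.

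To build a Hamilton path from $A$ to $B$, we split into cases.
\begin{enumerate}[label=(\roman*)]
\item If $A$ and $B$ lie in different parts, say $A\ni n$ and $B\not\ni n$, we pick a cross edge $CD$ with $C\ni n$, $D\not\ni n$, $C\ne A$ and $D\ne B$. We then take a Hamilton path $A\to C$ in the first part, the edge $CD$, and a Hamilton path $D\to B$ in the second part.
\item If $A$ and $B$ lie in the same part, say the larger one, we use induction to get a Hamilton path $A\to B$ inside that part. We then choose an edge $XY$ on this path such that $X$ and $Y$ both have cross neighbours $X'$ and $Y'$ in the other part with $X'\ne Y'$. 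We splice in a Hamilton path $X'\to Y'$ of the other part, replacing the edge $XY$.
\end{enumerate}

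The main obstacle is guaranteeing that suitable cross edges exist, especially in case (ii). We need two consecutive path vertices whose cross neighbours are distinct, and we cannot control which edges the inductive path uses. The first thing we would try is strengthening the induction hypothesis: for instance, showing that every edge of some specific kind, namely a swap of the largest element, can be forced into the path. Alternatively we would use the abundance of cross edges. A vertex $X\ni n$ has cross neighbours $X-n+x$ for all $x\notin X$ with $x<n$, and consecutive vertices $X$, $Y=X\triangle\{a,b\}$ share at most one such neighbour. So it suffices that one of them has at least two cross neighbours, which holds unless $X\supseteq[n-k,n]$-type degenerate configurations occur.

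These degenerate small cases, where a part has one or two vertices or $M=[n-k+1,n]$ so that $\SF$ is the full Johnson-type graph, are handled directly. The full case is the Johnson graph $J(n,k)$, whose Hamilton-connectedness is known, or follows by the same splitting with symmetric parts. Finally, when the part containing $A$ and $B$ is a singleton, $A=B$ is impossible, so case (ii) only arises with parts of size at least two.
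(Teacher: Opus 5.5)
\textbf{The gap is case (ii).} You flag it as the main obstacle but do not resolve it: you list two strategies you ``would try'' and carry out neither.

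Your abundance argument only covers endpoints $A,B\ni n$. There it does work, and more simply than you say. Every $X\ni n$ has exactly $n-k$ cross neighbours $(X\setminus\{n\})\cup\{x\}$, $x\in[n-1]\setminus X$. So for $n-k\ge 2$ any edge $XY$ of the inductive path can be spliced. For $n-k\le 1$, $\SF(n,M)$ is a clique or a single vertex. (Your condition ``$X\supseteq[n-k,n]$'' can never hold for a $k$-set.)

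However, ``say the larger one'' is not without loss of generality. Which part contains $A$ and $B$ is forced, and the two parts are not symmetric. For $A,B\not\ni n$, the swaps $(X\setminus\{x\})\cup\{n\}$ with $x\neq X_k$ often leave $\cS(n,M)$. So a vertex can have a single cross neighbour, and adjacent vertices can share it. For example, with $M=\{1,3,6\}$, the adjacent sets $\{1,2,4\}$ and $\{1,2,5\}$ both have $\{1,2,6\}$ as their only cross neighbour. Since you cannot control which edges the inductive Hamilton path uses, your criterion ``one of $X,Y$ has two cross neighbours'' gives you nothing here.

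There are also smaller omissions. If the other part is a single vertex, the splice needs $X'=Y'$, which your formulation excludes. In case (i) you never show that a cross edge $CD$ with $C\neq A$, $D\neq B$ exists.

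\textbf{How the paper avoids this.} It inducts on $|\cS(n,M)|$ and splits on an element $i\in F\setminus T$ chosen from the two endpoints. The endpoints therefore always fall into different parts, and both parts remain subset flip graphs via the shift maps $\varphi_\pm$. Only a single connecting edge $T'F'$ is then needed, with $T'\neq F$ and $F'\neq T$ whenever the relevant part has at least two vertices. The paper exhibits it by explicit case analysis.

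\textbf{How your fixed split on $n$ could be completed.} It needs an argument you do not give. For $A,B\not\ni n$, use the cross neighbour $(X\setminus\{X_k\})\cup\{n\}$, which always lies in $\cS(n,M)$ because $M_k=n$. Two path-adjacent vertices whose $k-1$ smallest elements differ get distinct such neighbours. If no edge of the Hamilton path has this property, then all sets avoiding $n$ share their $k-1$ smallest elements. Since $[k]$ is among them, that common prefix is $[k-1]$. This forces $n=k+1$ or $M=[k-1]\cup\{n\}$, and in both cases $\SF(n,M)$ is a clique. As written, though, the proposal leaves the central step open.
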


\begin{proof}
We proceed by induction on the size of $\cS(n,M)$. 
    For the base case $|\cS(n,M)| = 1$, the graph $\SF(n,M)$ consists of a single vertex and is therefore trivially Hamilton-connected. 
	Let $k = |M|$.
	Observe that $|\cS(n,M)|=1$ if and only if $M = [k]$.

    For the inductive step, assume $|\cS(n,M)| \ge 2$.
	Let $F, T \in \cS(n,M)$ be the given end-vertices, and construct a Hamilton path in $\SF(n,M)$ between $F$ and $T$.

	We select an integer $i \in F \setminus T$ and partition the vertex set $\cS(n,M)$ into two disjoint subsets, $\cS^+$ and $\cS^-$, based on whether sets contain the integer $i$ as follows.

The subset $\cS^+$ consists of all sets in $\cS(n,M)$ that contain the integer $i$. We define the set $M^+ \in \binom{[n-1]}{k-1}$ by 
\[
M_j^+ = 
\begin{cases} 
    M_j & \text{if } M_j < i, \\
    M_{j+1} - 1 & \text{if } M_j \ge i,
\end{cases}
\]
for all $j \in [k-1]$ and a function $\varphi_+ \colon \cS(n-1, M^+) \to \cS^+$ by:
\[
	\varphi_+(A) = \{ x \in A : x < i \} \cup \{ i \} \cup  \{ x+1 : x \in A, x \ge i \}
\]
for all $A \in \cS(n-1,M^+)$.
Observe that $\varphi_+$ is a bijection where the inverse $\varphi_+^{-1}$ maps every $B \in \cS(n-1,M^+)$ to:
\[
\varphi_+^{-1}(B) = \{ x \in B : x < i \} \cup \{ x-1 : x \in B, x > i \}.
\]

	Conversely, the subset $\cS^-$ consists of all sets in $\cS(n,M)$ that avoid $i$. 
	We define the set $M^- \in \binom{[n-1]}{k}$ by
    \[
    M_j^- = 
    \begin{cases} 
        M_j & \text{if } M_j < i, \\
        M_{j}-1 & \text{if } M_j \ge i,
    \end{cases}
    \]
	for all $j \in [k]$ and a function $\varphi_- \colon \cS(n-1, M^-) \to \cS^-$ by:
\[
\varphi_-(A) = \{ x \in A : x < i \} \cup \{ x+1 : x \in A, x \ge i \}
\]
for all $A \in \cS(n-1,M^-)$.
Observe that $\varphi_-$ is a bijection where the inverse $\varphi_-^{-1}$ maps every $B \in S(n-1,M^-)$ to:
\[
\varphi_-^{-1}(B) = \{ x \in B : x < i \} \cup \{ x-1 : x \in B, x > i \}.
\]

Observe that for any $A, B \in \cS(n-1, M^+)$, the mapping $\varphi_+$ preserves the size of the symmetric difference, i.e., $|A \triangle B| = |\varphi_+(A) \triangle \varphi_+(B)|$. This holds because $i \in \varphi_+(A) \cap \varphi_+(B)$, so $i$ is never an element of $\varphi_+(A) \triangle \varphi_+(B)$, and the shifting of elements $x > i$ to $x-1$ is a rank-preserving operation that does not affect the cardinality of the difference. 

Consequently, $A$ and $B$ are adjacent in $\SF(n-1, M^+)$ if and only if $\varphi_+(A)$ and $\varphi_+(B)$ are adjacent in the induced subgraph $\SF(n, M)[\cS^+]$. This implies that the mapping $\varphi_+$ is a graph isomorphism between $\SF(n, M)[\cS^+]$ and $\SF(n-1, M^+)$.
Analogously, the mapping $\varphi_-$ is a graph isomorphism between $\SF(n, M)[\cS^-]$ and $\SF(n-1, M^-)$.

By the inductive hypothesis, both $\SF(n-1,M^+)$ and $\SF(n-1,M^-)$ are Hamilton-connected. 
To complete the inductive step, it suffices to identify vertices 
$T' \in \cS^-$ and $F' \in \cS^+$ satisfying
\begin{equation*}\tag{*}\label{star:ind}
\begin{minipage}[c]{0.85\textwidth}
\begin{itemize}
    \item $\SF(n,M)$ contains edge $T'F'$, 
    \item $T' \neq F$ whenever $|\cS(n-1,M^+)| \ge 2$, and
    \item $F' \neq T$ whenever $|\cS(n-1,M^-)| \ge 2$.
\end{itemize}
\end{minipage}
\end{equation*}

To find the appropriate $F'$ and $T'$, we distinguish the following cases.
Observe first that $|\cS(n-1,M^+)| = 1$ if and only if $M^+_{k-1} = k-1$, 
and $|\cS(n-1,M^-)| = 1$ if and only if $M^-_k = k$.

\paragraph{Case 1: $|\cS(n-1,M^+)| = 1$.}
The unique element of $\cS(n-1,M^+)$ is $[k-1]$. 
If $i \le k-1$ or $M_k = k$, then $\cS(n,M)$ consists only of $[k]$, which is the base case. 
Hence, we may assume $k \le i \le M_k$. 
Then $\cS(n-1,M^-)$ consists of the sets $[k-1] \cup \{j\}$ for all $j \in [k, M^-_k]$. 
It follows that $F = \varphi_+([k-1])$, and this vertex is adjacent to every vertex of $\cS^-$.
We set $T' = F$ and choose $F' \in S^-$ satisfying \eqref{star:ind}.

\paragraph{Case 2: $|\cS(n-1, M^-)| = 1$.}
Similarly, the unique element of $\cS(n-1,M^-)$ is $[k]$, while $\cS(n-1,M^+)$ consists of the sets 
$[k] \setminus \{j\}$ for $j \in [k]$. 
It follows that $T = \varphi_-([k-1])$, and this vertex is adjacent to every vertex of $\cS^+$.
We set $F' = T$ and choose $T' \in S^+$ satisfying \eqref{star:ind}.

\paragraph{Case 3: $|\cS(n-1, M^+)| \ge 2$, $|\cS(n-1, M^-)| \ge 2$, and $i > M_{k-1}$.}
From $i > M_{k-1}$ we obtain $M^-_{k-1} = M^+_{k-1} \ge k$.

\begin{itemize}
	\item If $F \neq [k-1]\cup \{i\}$, we set $T' = [k-1] \cup \{i\}$ and 
		\[
			F' = \begin{cases}
				[k] & \text{ if }[k]\neq T, \text{ else} \\
				[k-1]\cup\{k+1\} & \text{ if }i \neq k+1, \text{ else }\\
				[k-1]\cup\{k+2\} & \text{ otherwise.}\\
			\end{cases}
		\]
		Observe that~\eqref{star:ind} is satisfied.

	\item If $F = [k-1] \cup \{ i\}$, we set $T' = [k-2] \cup \{k,i\}$ and
		\[
			F' = \begin{cases}
				[k] & \text{ if }[k]\neq T, \text{ else} \\
				[k-2]\cup\{k,k+1\} & \text{ if }i \neq k+1, \text{ else }\\
				[k-2]\cup\{k,k+2\} & \text{ otherwise.}\\
			\end{cases}
		\]

\end{itemize}

\paragraph{Case 4: $|\cS(n-1, M^+)| \ge 2$, $|\cS(n-1, M^-)| \ge 2$, and $i \le M_{k-1}$.}
It follows that $M^+_{k-1} = M^-_k \ge k+1$ and $M_k \geq k+2$.

\begin{itemize}
	\item If $T \neq [k+1] \setminus \{\min(i,k+1)\}$, we set $F' = [k+1] \setminus \{\min(i,k+1)\}$ and 
		\[
			T' = \begin{cases}
				[k-1] \cup \{\max(i,k)\} & \text{ if }[k-1]\cup \{\max(i,k)\}  \neq F, \text{ else} \\
			[k-1] \cup \{ k+1\} & \text{ if } i \leq k-1, \text{ else }\\
			[k-2] \cup \{ k, \max(i,k+1)\} & \text{ otherwise.}\\
			\end{cases}
		\]

	\item If $T = [k+1] \setminus \{\min(i,k+1)\}$, we set $F' = \varphi_-(M^-)$ and
		\[
			T' = \begin{cases}
				F' \triangle \{ F'_k, i\} & \text{ if } F' \triangle \{ F'_k, i\} \neq F, \\
				F' \triangle \{ F'_{k-1}, i\} & \text{ otherwise.} \\
			\end{cases}
		\]

\end{itemize}
In all cases we constructed $F'$ and $T'$ satisfying~\eqref{star:ind}.
By induction, there exists a Hamilton paths between $F$ and $T'$ in $S^+$ and between $F'$ and $T$ in $S^-$, and their concatenation yields a Hamilton path between $F$ and $T$ in $\cS(n-1,M)$.
\end{proof}

\section{Isomorphism of neighborhood and subset flip graphs}

In this section, we analyze the structural properties of the neighborhood flip graph $\NF(d,\mu)$, for given $\mu \in [n]$.
Cycle $C$ is called alternating cycle in realization $E$ if every other edge in $C$ belongs to $E$.
By switching $C$ in $E$ we obtain another realization $E \triangle C$ with the same degree sequence as $E$.
Note that $2$-switch is a special case of switch along an alternating cycle of length $4$.
An alternating cycle between two realizations $E$ and $F$ is a cycle containing alternately edges in $E \setminus F$ and $F \setminus E$.
Observe that the symmetric difference $E \triangle F$ can be decomposed into edge-disjoint alternating cycles between realizations $E$ and $F$ of the same degree sequence.

\begin{lemma}\label{left_leaning}
	For every $A,B \in \binom{[n] \setminus \{\mu\}}{d_\mu}$ if $A \le B$ and $B \in \cN(d,\mu)$, then $A \in \cN(d,\mu)$.
\end{lemma}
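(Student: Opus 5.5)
The plan is to reduce the statement to a single elementary exchange and then to realize that exchange by one $2$-switch.

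\emph{Reduction to one exchange.} I would induct on $\sum_j (B_j - A_j)$; the base case $A = B$ is trivial. If $A \ne B$, let $j$ be the smallest index with $A_j < B_j$. For every $l < j$ we have $B_l = A_l$, so all elements of $B$ smaller than $B_j$ are $B_1,\dots,B_{j-1} = A_1,\dots,A_{j-1}$. These are all smaller than $A_j$, hence $A_j \notin B$. Set $B' = B \triangle \{B_j, A_j\}$. In sorted order, $A_j$ simply takes the $j$-th position of $B'$, so $A \le B' \le B$ and the sum strictly decreases. Also $\mu \notin B'$, since $A_j \in A \subseteq [n]\setminus\{\mu\}$. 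It therefore suffices to show the following one-step claim: if $B \in \cN(d,\mu)$, $b \in B$, $a \notin B \cup \{\mu\}$ and $a < b$, then $B \triangle \{b,a\} \in \cN(d,\mu)$. The induction hypothesis, applied to the pair $A \le B'$, then finishes the argument.

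\emph{One exchange via a $2$-switch.} Take a realization $E$ in which $\mu$ has neighborhood $B$. Because the degree sequence is non-increasing and $a < b$, we have $d_a \ge d_b$. Now compare $N_E(a) \setminus \{b,\mu\}$ with $N_E(b)\setminus\{a,\mu\}$. The edge $ab$, if present, is counted in both $d_a$ and $d_b$. The vertex $\mu$ is a neighbor of $b$ but not of $a$. Hence
\[
|N_E(a)\setminus\{b,\mu\}| = d_a - [ab\in E] \ge d_b - [ab \in E] = |N_E(b)\setminus\{a,\mu\}| + 1 .
\]
So there is a vertex $u \notin \{a,b,\mu\}$ with $ua \in E$ and $ub \notin E$. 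The four vertices $\mu, b, u, a$ form an alternating $4$-cycle: $\mu b, ua \in E$ and $\mu a, ub \notin E$. Performing the $2$-switch that removes $\mu b, ua$ and adds $\mu a, ub$ gives a realization of $d$ in which $\mu$ has neighborhood $B \triangle\{b,a\}$, as required.

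I expect the only delicate point to be the counting step. One must handle correctly the possible edge $ab$, which cancels from both sides, and must ensure that $u \ne \mu$. The latter is exactly why $\mu$ is removed from both neighborhoods before comparing; removing it costs $b$ one neighbor and $a$ none, and this produces the strict inequality. The reduction step also needs care in choosing the \emph{smallest} index $j$, so that the replacement value $A_j$ is guaranteed not to lie in $B$.
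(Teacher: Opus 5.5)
Your proposal is correct and follows essentially the same argument as the paper: at the smallest index $j$ with $A_j \neq B_j$ you replace $B_j$ by $A_j$, and you realize this exchange by a $2$-switch on $\mu, A_j, u, B_j$, where $u$ comes from the degree comparison $d_{A_j} \ge d_{B_j}$. The differences are cosmetic. You induct on $\sum_j (B_j - A_j)$ rather than on the number of differing indices, and you spell out the pigeonhole count (the possible edge $ab$, and $u \neq \mu$) that the paper leaves implicit.
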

\begin{proof}

	The proof proceeds by induction on the number of indices $i$ where $A_i$ and $B_i$ differ. 
	If $A = B$, the statement trivially holds.

	For the induction step let $i \in [n]$ be the smallest index such that $A_i \neq B_i$.
	In a realization $E \in \cH(d,\mu,B)$, the vertex $\mu$ is adjacent to vertex $B_i$ but not to vertex $A_i$ and $d_{A_i} \ge d_{B_i}$.
	The Pigeonhole Principle guarantees the existence of a vertex $u$ such that $u$ is adjacent to $A_i$ but not to $B_i$ in $E$.
	Consequently, the vertices $\{\mu, A_i, u, B_i\}$ induce an alternating $4$-cycle (see Figure~\ref{il:left_leaning}). 
	By performing a $2$-switch along this cycle, we obtain a new realization $F$ of $d$ in which the neighborhood of $\mu$ is $B' = (B \setminus \{B_i\}) \cup \{A_i\}$. 
	Since $A \leq B'$ and the number of differing indices between $A$ and $B'$ is strictly smaller than between $A$ and $B$, we apply the induction to prove $A \in \cN(d,\mu)$.
\end{proof}

\begin{figure}[h]
    \centering
    \includegraphics[width=0.3\textwidth]{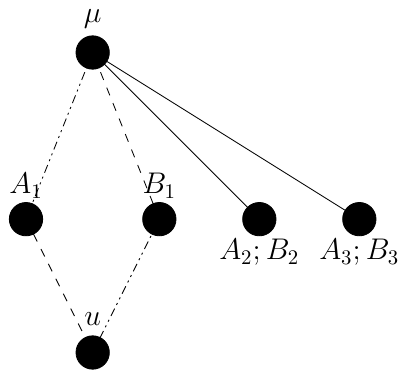}
	\caption{Illustration of Lemma~\ref{left_leaning}: a $2$-switch transforming $E \in \cH(d,\mu,B)$ into $F \in \cH(d,\mu,A)$ for $d=(3,1,1,1,1,1)$ and $A = \{v,x,y\}$ and $B = \{w,x,y\}$. Dashed edges are unique to~$E$, dashed-dotted to~$F$, and solid edges are common to both.}
    \label{il:left_leaning}
\end{figure}

A key feature of the partial order on $\cN(d,\mu)$ is the existence of a unique maximal element, which we denote by $M$.

\begin{lemma}\label{minimal}
There exists $M \in \cN(d,\mu)$ such that $A \le M$ for every $A \in \cN(d,\mu)$.
\end{lemma}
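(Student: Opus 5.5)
The plan is to take $M$ to be the greatest element of $\cN(d,\mu)$ in a total order that extends the partial order $\le$, and then show that $M$ dominates every $A \in \cN(d,\mu)$ by a swapping argument in the spirit of Lemma~\ref{left_leaning}. The partial order $\le$ on $\binom{[n]\setminus\{\mu\}}{d_\mu}$ is extended by the lexicographic comparison of the vectors $(A_{d_\mu}, A_{d_\mu-1}, \dots, A_1)$, comparing largest elements first. Since $\cN(d,\mu)$ is nonempty and finite, it has a lexicographically maximal element $M$. It then remains to prove $A \le M$ for every $A \in \cN(d,\mu)$.

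Suppose for contradiction that some $A \in \cN(d,\mu)$ satisfies $A \not\le M$. The first step is to produce a set $M' \in \cN(d,\mu)$ that is lexicographically larger than $M$. The natural approach is to fix a realization $E \in \cH(d,\mu,M)$ and a realization $F \in \cH(d,\mu,A)$, and then look at the symmetric difference $E \triangle F$. This decomposes into alternating cycles between $E$ and $F$, and at $\mu$ it pairs edges $\mu x$ with $x \in M\setminus A$ against edges $\mu y$ with $y \in A\setminus M$. Because $A \not\le M$, a Hall-type counting argument gives an element $y \in A \setminus M$ that can be exchanged for some $x \in M \setminus A$ with $x < y$. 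Concretely, take the largest index $j$ with $A_j > M_j$. Then the number of elements of $A$ that are at least $A_j$ exceeds the number of such elements of $M$, so some $y \ge A_j$ lies in $A\setminus M$. The natural candidate is $M' = M \triangle \{x,y\}$ with $x < y$, which is lexicographically larger than $M$. Note that the convention $X \triangle \{a,b\}$ requires $a \in X$ and $b \notin X$, so $x \in M$ and $y \notin M$ here.

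The main obstacle is showing that this $M'$ actually lies in $\cN(d,\mu)$. A single $2$-switch in $E$ requires a vertex $u$ adjacent to $y$ but not to $x$, and the pigeonhole trick of Lemma~\ref{left_leaning} only works in the direction where the new neighbour has degree at least that of the old one. Here $x < y$ means $d_x \ge d_y$, which is the wrong direction. I will therefore work with the alternating cycle of $E \triangle F$ through the edge $\mu y$ instead of a single $2$-switch. Switching an alternating cycle $C$ of $E \triangle F$ in $E$ changes the neighbourhood of $\mu$ by removing and adding the $\mu$-edges of $C$. The first attempt will be to choose the decomposition of $E \triangle F$ into alternating cycles so that some cycle through $\mu$ contains exactly one pair of $\mu$-edges. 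This can be done by splitting the closed alternating walk at its repeated visits to $\mu$. Switching that cycle gives a realization whose $\mu$-neighbourhood is $M \triangle \{x', y'\}$ for some $x' \in M \setminus A$ and $y' \in A \setminus M$.

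If every such single-pair exchange has $y' < x'$, then all of $A \setminus M$ is pushed to smaller values than $M \setminus A$. In that case I will derive $A \le M$ by counting, which contradicts the assumption. To make this counting go through, I will choose the pairing of $M\setminus A$ and $A \setminus M$ at $\mu$ in the cycle decomposition freely; any pairing of the edges at $\mu$ extends to an alternating-cycle decomposition. I will pair so that the largest element of $A\setminus M$ is matched with the smallest element of $M \setminus A$. That yields $x' < y'$ whenever $A \not\le M$, so $M'$ is lexicographically larger than $M$ and lies in $\cN(d,\mu)$. This contradicts the maximality of $M$ and proves the lemma. Uniqueness of $M$ then follows from the antisymmetry of $\le$.
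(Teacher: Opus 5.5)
\textbf{There is a genuine gap, and it cannot be closed, because the statement itself is false.}

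\textbf{The gap.} It is the step where you produce an alternating cycle of $E\triangle F$ that meets $\mu$ exactly once and also carries your prescribed pair $x=\min(M\setminus A)$, $y=\max(A\setminus M)$.

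Splitting a closed alternating trail at its repeated visits to $\mu$ does not produce alternating closed trails. A segment from $\mu$ back to $\mu$ can have odd length, and then both of its $\mu$-edges lie in $E\setminus F$ (or both in $F\setminus E$).

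It is true that any pairing of the $\mu$-edges extends to a decomposition into closed alternating trails. However, the trail through your pair may return to $\mu$ through further pairs. Switching it then replaces some $X\subseteq M\setminus A$ by some $Y\subseteq A\setminus M$ with $|X|=|Y|$. The result is larger than $M$ in your order only if $\max Y>\max X$, and nothing guarantees that. So the two freedoms you invoke cannot be used at the same time. The example below shows that no choice of $E$, $F$ or decomposition rescues the argument.

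\textbf{Counterexample.} Take the paper's running example $d=(3,2,2,2,1)$ and $\mu=2$, and write $ij$ for the edge $\{i,j\}$.
\begin{itemize}
\item If $N(2)=\{1,5\}$, the residual degrees of $1,3,4,5$ are $2,2,2,0$. This forces the triangle on $\{1,3,4\}$.
\item If $N(2)=\{3,4\}$, the residual degrees are $3,1,1,1$. This forces the star $13,14,15$.
\item If $N(2)$ is $\{3,5\}$ or $\{4,5\}$, vertex $1$ would need three neighbours in $\{3,4,5\}$, but $5$ has residual degree $0$. So neither set occurs.
\end{itemize}
Hence $\cN(d,2)=\{\{1,3\},\{1,4\},\{1,5\},\{3,4\}\}$. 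Here $\{1,5\}$ and $\{3,4\}$ are incomparable maximal elements with no common upper bound in $\cN(d,2)$.

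This is exactly where your argument breaks. Your lexicographically maximal $M$ is $\{1,5\}$, and $A=\{3,4\}$. The sets $\cH(d,2,M)$ and $\cH(d,2,A)$ are the singletons $E=\{12,25,13,14,34\}$ and $F=\{23,24,13,14,15\}$. Their symmetric difference $E\triangle F$ is the single closed alternating trail $2,1,5,2,3,4,2$, made of two triangles sharing $\mu$. It contains no closed alternating trail meeting $\mu$ only once. Your candidate $M'=\{4,5\}$ is not in $\cN(d,2)$.

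\textbf{Comparison with the paper.} The paper takes a different route: it assumes two distinct maximal sets, chooses $E,F$ minimizing $|E\triangle F|$, and notes that an exchange at $\mu$ in either direction contradicts the maximality of one of them. It fails at the same point, because it also assumes that $E\triangle F$ splits into alternating cycles each meeting $\mu$ once. For the two maximal sets $\{1,5\}$ and $\{3,4\}$ above, the minimal symmetric difference is exactly this trail.

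Downstream, $\NF(d,2)$ here is $K_4$ minus the edge between $\{1,5\}$ and $\{3,4\}$. This graph has no Hamilton path from $\{1,3\}$ to $\{1,4\}$, so Theorem~\ref{thm:ham} also fails as stated.
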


\begin{proof}
Assume for contradiction that there exist two distinct maximal sets $A,B \in \cN(d,\mu)$ with respect to $\le$. 
Let $E \in \cH(d,\mu,A)$ and $F \in \cH(d,\mu,B)$ be such that $|E \triangle F|$ is minimal. 
Then $E \triangle F$ decomposes into edge-disjoint alternating cycles.

Let $C$ be an alternating cycle between $E$ and $F$. If $C$ does not contain an edge incident to $\mu$, then performing a switch along $C$ on $E$ yields a realization $I \in \cH(d,\mu,A)$ with $|I \triangle F| < |E \triangle F|$, contradicting the minimality of $E \triangle F$. 

Hence, $C$ contains a vertex $\mu$ and let $u,v$ be two neighbors of $\mu$ on $C$. 
Without loss of generality $\mu u \in E \setminus F$ and  $\mu v \in F \setminus E$.
If $u < v$, then performing a switch along $C$ on $E$ yields a graph whose neighborhood of $\mu$ is a set $D \in \cN(d,\mu)$ with $A < D$, contradicting the maximality of $A$. 
Similarly, if $u>v$, by a switch along $C$ on $F$ we obtain a set $D \in \cN(d,\mu)$ with $B < D$, contradicting the maximality of $B$.

Thus, no two distinct maximal elements exist, and therefore there is a unique maximal set $M \in \cN(d,\mu)$.
\end{proof}

Let $\sigma: [n] \setminus \{\mu\} \to [n-1]$ be the unique order-preserving bijection.
The existence of the unique maximal element $M$ in $\cN(d,\mu)$ allows us to characterize $\cN(d,\mu)$ precisely as the set of all subsets $A$ of size $d_\mu$ such that $A \le \sigma(M)$. 
This characterization provides the bridge to our main isomorphism result.

\begin{theorem}\label{thm:ham}
For every degree sequence $d$ and every vertex $\mu$ there exists a set $M \subseteq [n-1]$ such that the flip graph $\NF(d,\mu)$ is isomorphic to $\SF(n-1,\sigma(M))$. Therefore, $\NF(d,\mu)$ is Hamilton-connected.
\end{theorem}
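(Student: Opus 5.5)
The plan is to take $M$ to be the unique maximal element of $\cN(d,\mu)$ given by Lemma~\ref{minimal}, and to show that the order-preserving relabelling $\sigma$ is a graph isomorphism from $\NF(d,\mu)$ onto $\SF(n-1,\sigma(M))$. Hamilton-connectivity of $\NF(d,\mu)$ then follows at once from Theorem~\ref{HC-SF}.

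First we check that $\sigma$ maps $\cN(d,\mu)$ bijectively onto $\cS(n-1,\sigma(M))$. Because $\sigma$ is an order-preserving bijection from $[n]\setminus\{\mu\}$ to $[n-1]$, it preserves the $i$-th smallest element of every set. Hence for $A,B\subseteq[n]\setminus\{\mu\}$ of size $d_\mu$ we have $A\le B$ if and only if $\sigma(A)\le\sigma(B)$, and $\sigma$ induces a bijection between $\binom{[n]\setminus\{\mu\}}{d_\mu}$ and $\binom{[n-1]}{d_\mu}$. For the inclusion $\sigma(\cN(d,\mu))\subseteq\cS(n-1,\sigma(M))$: every $A\in\cN(d,\mu)$ satisfies $A\le M$ by Lemma~\ref{minimal}, so $\sigma(A)\le\sigma(M)$. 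For the reverse inclusion: if $A'\in\binom{[n-1]}{d_\mu}$ with $A'\le\sigma(M)$, put $A=\sigma^{-1}(A')$. Then $A\le M$ and $M\in\cN(d,\mu)$, so Lemma~\ref{left_leaning} gives $A\in\cN(d,\mu)$.

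Next, adjacency is preserved. In both graphs, two vertices are adjacent exactly when they differ by exchanging one element, that is, when their symmetric difference has size $2$. Since $\sigma$ is a bijection on ground sets, $|\sigma(A)\triangle\sigma(B)|=|A\triangle B|$. So $A=B\triangle\{a,b\}$ holds if and only if $\sigma(A)=\sigma(B)\triangle\{\sigma(a),\sigma(b)\}$. This makes $\sigma$ an isomorphism.

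The only point that needs care is the surjectivity step, because it depends on the downward-closure property of Lemma~\ref{left_leaning}. That lemma is stated for sets of size $d_\mu$ inside $[n]\setminus\{\mu\}$, and it relies on $d$ being non-increasing so that $A_i<B_i$ implies $d_{A_i}\ge d_{B_i}$. Here both sets are pulled back along $\sigma$ into $[n]\setminus\{\mu\}$, so this condition holds and the lemma applies directly. Finally, note a notational point: the statement writes $\SF(n-1,\sigma(M))$ with $M\subseteq[n-1]$. I will read $M$ as the maximal element of $\cN(d,\mu)$, a subset of $[n]\setminus\{\mu\}$, so that $\sigma(M)\subseteq[n-1]$ has size $d_\mu$, and I will state this convention explicitly.
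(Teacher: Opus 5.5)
Your proposal is correct and follows the paper's proof: take the maximum $M$ from Lemma~\ref{minimal}, and use Lemma~\ref{left_leaning} to get $\cN(d,\mu)=\{A : A\le M\}$. Then note that the order-preserving relabelling $\sigma$ preserves both $\le$ and single-element exchanges, and invoke Theorem~\ref{HC-SF}. Your explicit split into the two inclusions, and your reading of $M$ as a subset of $[n]\setminus\{\mu\}$ (which is what the paper's proof uses too), are only slightly more careful versions of the same argument.
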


\begin{proof}
By Lemmas~\ref{left_leaning} and~\ref{minimal}, the set $\cN(d,\mu)$ contains a maximal element $M \in \cN(d,\mu)$ with the property that for every 
$A \in \binom{[n]\setminus\{\mu\}}{d_\mu}$, we have
\[
A \in \cN(d,\mu) \quad \text{if and only if} \quad A \le M.
\]

We define a bijection $\varphi : \cN(d,\mu) \to \cS(n-1,\sigma(M))$ such that $\varphi(A) = \{\sigma(a) : a \in A\}$ for all $A \in \cN(d, \mu)$.
Since $\sigma$ preserves the order of elements, it follows that $A \le M$ if and only if $\varphi(A) \le \varphi(M)$, and hence $\varphi$ is a bijection between $\cN(d,\mu)$ and $\cS(n-1,\sigma(M))$.

Finally, $\varphi$ preserves adjacency: if $A = B \triangle \{u,v\}$, then $\varphi(A) = \varphi(B) \triangle \{\sigma(u), \sigma(v)\}$, and thus $A$ and $B$ are adjacent in $\NF(d,\mu)$ if and only if $\varphi(A)$ and $\varphi(B)$ are adjacent in $\SF(n-1,\sigma(M))$.

Therefore, $\varphi$ is a graph isomorphism, and $\NF(d,\mu) \cong \SF(n-1,\sigma(M))$.
Theorem~\ref{HC-SF} implies that $\SF(n-1, \sigma(M))$ is Hamilton-connected, and consequently $\NF(d,\mu)$ is also Hamilton-connected.
\end{proof}

\section{Hamilton path in realization graphs}

In this section, we prove the Theorem~\ref{thm_final}.

\begin{lemma}\label{ab-swap}
For every $A,B \in \cN(d,\mu)$ satisfying $\{a\} = B \setminus A$ and $\{b\} = A \setminus B$ for some vertices $a,b$, the set $\cH(d,\mu, A)$ contains an $(a,b)$-swappable realization.
\end{lemma}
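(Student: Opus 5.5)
The plan is to argue by minimal symmetric difference, in the same spirit as the proof of Lemma~\ref{minimal}. Recall that $b\in A$ and $a\in B$, and that a realization $E\in\cH(d,\mu,A)$ is $(a,b)$-swappable exactly when some $u\notin\{\mu,a,b\}$ has $au\in E$ and $bu\notin E$. Fix $E\in\cH(d,\mu,A)$ and $F\in\cH(d,\mu,B)$ with $|E\triangle F|$ minimal.

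First, decompose $E\triangle F$ into edge-disjoint alternating cycles. The only edges at $\mu$ in $E\triangle F$ are $\mu b\in E\setminus F$ and $\mu a\in F\setminus E$. Hence exactly one cycle $C$ of the decomposition passes through $\mu$. Any other cycle $C'$ avoids $\mu$. Switching $C'$ in $E$ keeps the neighbourhood of $\mu$ equal to $A$ and decreases $|E\triangle F|$, which contradicts minimality. So $E\triangle F=C$. Among all such minimal pairs, and among decompositions, I take $C$ to be a shortest alternating cycle. Write it as
\[
C=\mu,\,b,\,x_1,\,x_2,\,\dots,\,x_m,\,a,\,\mu,
\]
with $m$ odd. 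Its edges are $bx_1\in F\setminus E$, then $x_1x_2\in E\setminus F$, and so on alternately, ending with $x_ma\in E\setminus F$.

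Now take the candidate $u=x_m$. We have $ax_m\in E$ and $x_m\neq\mu$. If $m=1$, then $bx_1\notin E$ and we are done. In general, if $bx_m\notin E$, then $E$ is $(x_m;a,b)$-adjacent and we are done. So assume $bx_m\in E$; there are two subcases.

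If $bx_m\in E\cap F$, consider the closed walk $b,x_1,\dots,x_m,b$. It alternates in $E$, using the $E$-edges $x_1x_2,\dots,x_{m-2}x_{m-1}$ and $x_mb$, and the non-$E$-edges $bx_1,\dots,x_{m-1}x_m$. Switching it in $E$ does not touch $\mu$, so it gives $E'\in\cH(d,\mu,A)$. This switch agrees with $F$ on the $m$ edges $bx_1,\dots,x_{m-1}x_m$ and breaks agreement only on $bx_m$. Since $m\ge 3$, we get $|E'\triangle F|<|E\triangle F|$, a contradiction.

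If instead $bx_m\in E\setminus F$, then $bx_m$ lies on $C$. We can then reroute $C$ through the chord: the cycle $\mu,b,x_m,a,\mu$ or a shorter alternating cycle obtained by splitting $C$ at the repeated vertex $b$. This contradicts the choice of $C$ as shortest, or after switching the split-off part it contradicts minimality of $|E\triangle F|$.

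The main obstacle is this last subcase, together with the fact that alternating "cycles" from the decomposition may be closed trails that revisit $b$ or $a$. I would handle it by choosing the pair $(E,F)$ and the decomposition lexicographically minimal: first minimise $|E\triangle F|$, then the length of $C$. Then I would check that every repeated-vertex configuration yields either a shorter alternating closed trail through $\mu b,\mu a$ or a $\mu$-free alternating piece that can be switched off.
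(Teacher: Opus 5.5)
There is a genuine gap, and it sits exactly where you flag it. First, what is fine. Your subcase $bx_m\in E\cap F$ is the paper's argument. The paper takes the same $u$ (the neighbour of $a$ on $C$ other than $\mu$) and, when $bu\in E$, switches the same closed walk $C'=(C\setminus\{au,a\mu,b\mu\})\cup\{bu\}$. It uses the switch directly, though: the result still contains $ax_m$, no longer contains $bx_m$, and lies in $\cH(d,\mu,A)$. So it is already $(x_m;a,b)$-adjacent, and no minimality of $|E\triangle F|$ is needed. The paper also treats $C$ as a simple cycle, and then your subcase $bx_m\in E\setminus F$ is vacuous. On a simple $C$ the neighbours of $b$ are $\mu,x_1$ and those of $x_m$ are $x_{m-1},a$, so $bx_m\in C$ forces $m=1$, contradicting $bx_1\notin E$. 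The rerouting you propose there is invalid in any case: in $\mu,b,x_m,a,\mu$ the three edges $\mu b,bx_m,x_ma$ all lie in $E$, so it is not alternating.

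The closed-trail case, however, is left open, and your plan cannot close it by inspecting $C$ alone.
\begin{itemize}
\item With $|E\triangle F|$ minimal, every decomposition of $E\triangle F$ into alternating closed trails consists of $C$ alone. So minimising the length of $C$ adds nothing.
\item Each repeated vertex splits $C$ into two odd closed sub-trails: were they even, the one avoiding $\mu$ could be switched off. So splitting never yields an alternating piece.
\item You do not address $x_m=b$, where ``$(x_m;a,b)$-adjacent'' is meaningless, and this case does occur. Take $E=\{\mu b,ab,y_1y_2\}$ and $F=\{\mu a,by_1,by_2\}$ (all degrees $1$ except $d_b=2$). Then $C=\mu,b,y_1,y_2,b,a,\mu$, and $E\triangle F$ contains no simple alternating cycle at all.
\end{itemize}
Your concern is therefore legitimate. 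The paper's proof, which asserts $u\neq b$, silently passes over this situation too, since here the neighbour of $a$ on the trail is $b$. Ruling such configurations out via minimality would require switches using edges outside $E\triangle F$, which your sketch never considers.

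The simplest repair avoids alternating cycles altogether, using the pigeonhole count from Lemma~\ref{left_leaning}. Write $N_G(x)$ for the neighbourhood of $x$ in $G$.
\begin{itemize}
\item If $d_a\ge d_b$, take any $E\in\cH(d,\mu,A)$. The sets $N_E(a)\setminus\{b\}$ and $N_E(b)\setminus\{a\}$ have sizes $d_a-\varepsilon\ge d_b-\varepsilon$ for the same $\varepsilon\in\{0,1\}$, and only the second contains $\mu$. Hence some $u\neq\mu$ lies in the first set but not the second, i.e.\ $E$ is $(u;a,b)$-adjacent.
\item If $d_b>d_a$, the same count in any $F\in\cH(d,\mu,B)$ gives $v\notin\{a,b,\mu\}$ with $bv\in F$ and $av\notin F$. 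The $2$-switch replacing $\mu a,bv$ by $\mu b,av$ then produces a $(v;a,b)$-adjacent realization in $\cH(d,\mu,A)$.
\end{itemize}
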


\begin{proof}
Let $E \in \cH(d,\mu,A)$ and $F \in \cH(d,\mu,B)$.  
The symmetric difference $E \triangle F$ contains a cycle $C$ that includes the edges $a\mu$ and $b\mu$.  
Let $u$ denote the neighbor of $a$ on $C$ distinct from $\mu$. Note that $b\mu, au \in E$ and $u \neq b$.

If $bu \notin E$, then, by definition, $E$ is $(u;a,b)$-adjacent, and hence $(a,b)$-swappable.

Otherwise, $bu \in E$. In this case, the cycle $C' = \bigl(C \setminus \{au, a\mu, b\mu\}\bigr) \cup \{bu\}$ is an alternating cycle with respect to $E$. Swapping along $C'$ in $E$ yields a $(u; a, b)$-adjacent realization in $\cH(d,\mu,A)$, which is by definition $(a,b)$-swappable.
\end{proof}

As a final step before proving Theorem~\ref{thm_final}, we demonstrate the existence of a pivot vertex $\mu$ that ensures the inductive step ends with a $(\mu;a,b)$-adjacent realization.

\begin{lemma}\label{lemma:u}
Let $d$ be a degree sequence, $S \in \cG(d)$ be a realization, and $a,b$ be two vertices such that $\cG(d)$ has an $(a,b)$-swappable realization.
Then, there exists a vertex $\mu$ satisfying at least one of the following conditions.
    \begin{itemize}
        \item \textbf{Fixed neighbor condition:} Every realization in $\cG(d)$ is $(\mu;a,b)$-adjacent.
        \item \textbf{Swappable-edge condition:} Realization $S$ is not $(\mu;a,b)$-adjacent and there exists $(\mu;a,b)$-adjacent realization $E$ in $\cG(d)$.
    \end{itemize}
\end{lemma}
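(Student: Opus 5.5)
The plan is a case split on the set of ``potential pivots''. Define
\[
U=\{u\in[n]\setminus\{a,b\} : \text{some realization in } \cG(d) \text{ is } (u;a,b)\text{-adjacent}\}.
\]
By hypothesis $\cG(d)$ has an $(a,b)$-swappable realization, so $U\neq\emptyset$. Every choice of $\mu$ below will be taken from $U$.

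\emph{First case.} Suppose some $u\in U$ is such that $S$ is not $(u;a,b)$-adjacent. Then $\mu=u$ satisfies the swappable-edge condition directly, with $E$ any $(u;a,b)$-adjacent realization, which exists by the definition of $U$.

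\emph{Second case.} Otherwise $S$ is $(u;a,b)$-adjacent for every $u\in U$. For a realization $E$ write $X_E=\{w : aw\in E,\ bw\notin E\}$ and $Y_E=\{w : bw\in E,\ aw\notin E\}$, with $w\notin\{a,b\}$. By definition $X_E\subseteq U$ for every $E$, and in this case $X_S=U$. Counting the neighbours of $a$ and $b$ outside $\{a,b\}$ gives
\[
d_a-d_b=|X_E|-|Y_E| \quad\text{for every } E\in\cG(d),
\]
because a possible edge $ab$ contributes equally to both degrees. I want to show that some $\mu\in U$ satisfies the fixed neighbor condition, i.e.\ $\mu\in X_E$ for every $E$. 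Suppose not. Then for each $u\in U$ there is a realization $E$ with $u\notin X_E$. Among all pairs $(u,E)$ with $u\in U$ and $u\notin X_E$, choose one minimizing $|S\triangle E|$.

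\emph{Deriving a contradiction.} Since $u\in X_S\setminus X_E$, the edge $au$ lies in $S\setminus E$, or the edge $bu$ lies in $E\setminus S$. Consider the alternating cycle of $S\triangle E$ through that edge.
\begin{itemize}
\item If switching a cycle of $S\triangle E$ in $E$ keeps some $u'\in U$ outside $X$, we obtain a pair with smaller symmetric difference, contradicting minimality.
\item Otherwise, as in the proofs of Lemma~\ref{left_leaning} and Lemma~\ref{ab-swap}, I will shortcut the cycle through $a$ or $b$ to get a $2$-switch or a shorter alternating cycle. This should produce a realization $E'$ together with a vertex $w$ satisfying $aw\in E'$, $bw\notin E'$, but $w\notin X_S$.
\end{itemize}
Such a $w$ lies in $U\setminus X_S$, contradicting $X_S=U$. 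The degree identity is the pigeonhole tool here. If $|X_E|<|X_S|$, then $|Y_E|<|Y_S|$, so there is a vertex $w\in Y_S\setminus Y_E$, which supplies the extra vertex the switch needs.

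\emph{Expected obstacle.} The main difficulty is this last step: choosing the right alternating cycle and the right shortcut so that the resulting realization genuinely exhibits a vertex of $U$ outside $X_S$, rather than just cycling among vertices already in $X_S$. I would first try the minimal-counterexample argument in the form above, splitting into the subcases $au\notin E$ and $bu\in E$. If that stalls, the fallback is to strengthen the extremal choice to minimize $|X_S\setminus X_E|$ first and $|S\triangle E|$ second.
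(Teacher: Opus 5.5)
\textbf{There is a genuine gap: the second case is not proved.} Your setup is correct as far as it goes. In the second case $X_S=U$, and $d_a-d_b=|X_E|-|Y_E|$ holds for every realization. But the step that should finish the proof is never carried out. The second bullet only asserts that a shortcut ``should produce'' a realization $E'$ and a vertex $w\in U\setminus X_S$. The minimality of the pair $(u,E)$ is never turned into a concrete switch, and you flag this yourself as the obstacle.

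\textbf{The missing idea.} No alternating cycle of $S\triangle E$ is needed; your final remark already contains the argument.
\begin{enumerate}
\item Suppose $Y_S\neq\emptyset$. Take $w\in Y_S$ and any $u\in X_S=U\neq\emptyset$; then $u\neq w$.
\item We have $au,bw\in S$ and $aw,bu\notin S$. So the $2$-switch on $a,u,b,w$ in $S$ yields $S'$ with $aw\in S'$ and $bw\notin S'$.
\item Hence $w\in X_{S'}\subseteq U=X_S$, contradicting $aw\notin S$. So $Y_S=\emptyset$.
\item The degree identity now gives $|X_E|=|X_S|+|Y_E|\ge|X_S|$ for every $E$. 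Together with $X_E\subseteq X_S$, this forces $X_E=X_S$ for all $E$.
\item Therefore any $\mu\in U$ satisfies the fixed neighbor condition.
\end{enumerate}
This is a direct proof, with no contradiction hypothesis and no extremal choice.

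\textbf{Comparison with the paper.} The paper follows the local route of your second bullet. Assuming both conditions fail, it takes $E$ that is not $(\mu;a,b)$-adjacent. It then follows an alternating cycle $C$ of $S\triangle E$ through $\mu a$ to the next vertex $v$ (after $a$, or after $b$ if $ab\in C$). Finally it $2$-switches $\mu,a,v,b$ in $S$, obtaining a $(v;a,b)$-adjacent realization while $S$ is not $(v;a,b)$-adjacent.

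This is the same switch as above with $u=\mu$ and $w=v$, so the real work is certifying $v\in Y_S$. One of the two facts needed for this is justified in the paper only by the alternation of $C$: $vb\in S$ when $ab\notin C$, and $va\notin S$ when $ab\in C$. Alternation of $C$ says nothing about edges of $S\triangle E$ lying on other cycles. This is exactly the difficulty you anticipated. The degree identity sidesteps it by producing a vertex of $Y_S$ globally rather than along a cycle.
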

\begin{proof}
We prove this lemma by contradiction.
The assumption of the lemma provides a $(\mu;a,b)$-adjacent realization in $\cG(d)$ for some vertex $\mu$.
Since Swappable-edge condition is not satisfied, $S$ is $(\mu;a,b)$-adjacent, so $\mu a \in S$ and $\mu b \notin S$.
Since Fixed neighbor condition is not satisfied there exists $E \in \cG(d)$ that is not $(\mu;a,b)$-adjacent.

We assume that $E$ avoids $a \mu$ as the case where $E$ contains $b \mu$ is analogous.
Let $C$ be an alternating cycle between $S$ and $E$ passing through edge $a \mu$.
The two possible configurations of C are illustrated in Figure~\ref{il:lemma_u}.

\begin{itemize}
	\item \textbf{Case $ab \notin C$:} Let $v$ be the neighbor of $a$ on $C$ different from $\mu$.
Hence, $av \in E$ and $av \notin S$ which implies that $S$ is not $(v;a,b)$-adjacent.
We construct a $(v;a,b)$-adjacent realization of $\cG(d)$ which will contradict Swappable-edge condition.

Since $E$ is not $(v;a,b)$-adjacent, it follows that $vb \in E$, and therefore $vb \notin C$ and $vb \in S$, since otherwise $C$ is not an alternating cycle.
Vertices $\mu,a,v,b$ form an alternating 4-cycle in $S$ and performing 2-switch along this 4-cycle provides a $(v;a,b)$-adjacent realization of $\cG(d)$.
\item
\textbf{Case $ab \in C$:}
Let $v$ be the neighbor of $b$ on $C$ different from $a$. Hence, $vb \notin E$ and $vb \in S$ which implies that $S$ is not $(v;a,b)$-adjacent. Similarly to previous case we construct a $(v;a,b)$-adjacent realization of $\cG(d)$ which will contradict Swappable-edge condition.

Since $E$ is not $(v;a,b)$-adjacent, it follows that $va \notin E$ and $va \notin C$, so $va \notin S$.

Vertices $\mu, a, v, b$ form an alternating $4$-cycle in $S$ and performing a $2$-switch along this $4$-cycle provides a $(v;a,b)$-adjacent realization of $\cG(d)$.
\end{itemize}

In all cases, Swappable-edge condition is satisfied.
\end{proof}

\begin{figure}[h]
    \centering
    \includegraphics[width=0.5\textwidth]{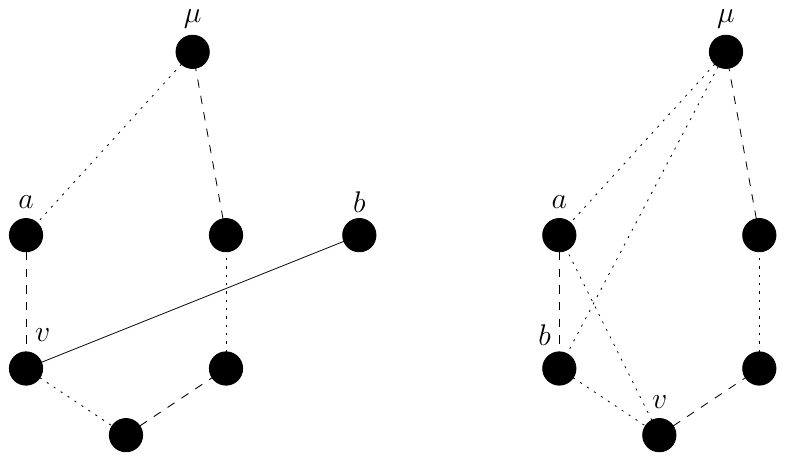}
	\caption{Illustration of Lemma~\ref{lemma:u}. 
    Dashed edges belong only to $E$, dash-dotted edges only to $S$, solid edges to both $E$ and $S$, and dotted edges to neither.
    Left: case $ab \notin C$.
    Right: case $ab \in C$.
    In both cases, a $2$-switch on $\{\mu,a,v,b\}$ in $S$ yields a realization containing $av$ but avoiding $vb$.}
    \label{il:lemma_u}
\end{figure}

Now we are ready to prove the Lemma~\ref{lem:final}.

\begin{proof}[Proof of Lemma \ref{lem:final}]
We prove the statement by induction on the length $n$ of sequence $d$.
If there is only one realization in $\cG(d)$, then the statement of the lemma holds; so we assume that there are at least two realizations.
Note that $\cG(d)$ has only one realization if $n \le 3$ which provides the base of induction.

In order to prove the second statement, consider two vertices $a,b$ such that $\cG(d)$ has an $(a,b)$-swappable realization.
By Lemma \ref{lemma:u}, there exists a vertex $\mu$ satisfying the Fixed neighbor or Swappable-edge condition.
Let $k = |\cN(d,\mu)|$ and let $A_1$ be the set of neighbors of $\mu$ in $S$.
If Swappable-edge condition holds, then Lemma \ref{lemma:u} provides a $(\mu;a,b)$-adjacent realization $E$, and let $A_k$ be the set of neighbors of $\mu$ in $E$.
Observe that $A_1 \neq A_k$ since $A_1 \triangle A_k$ contains $a$ or $b$.
If Fixed neighbor condition holds, then choose $A_k \in \cN(d,\mu)$ such that $A_1 \neq A_k$ unless $k = 1$.
For both conditions, observe that all realizations in $\cH(d,\mu,A_k)$ are $(\mu;a,b)$-adjacent, so we construct a Hamilton path in $\GF(d)$ terminating in some realization of $\cH(d,\mu,A_k)$.

By Theorem~\ref{thm:ham}, the graph $\NF(d,\mu)$ contains a Hamilton path $A_1, A_2, \ldots, A_k$.
For every $i \in [k]$, we iteratively construct a Hamilton path in $\HF(d,\mu,A_i)$ from $S_i \in \cH(d,\mu,A_i)$ to $T_i \in \cH(d,\mu,A_i)$ such that $T_i$ and $S_{i+1}$ are neighbors in $\cG(d)$ and $S_1 = S$ as follows.
Note that concatenation of these Hamilton paths provides the desired Hamilton path in $\GF(d)$.

Consider $i \in [k-1]$ and assume that $S_i \in \HF(d,\mu,A_i)$ is given.
Since $A_i$ and $A_{i+1}$ are neighbors in $\NF(d,\mu)$, there are vertices $a_i,b_i \in [n] \setminus \{\mu\}$ such that $\{b_i\} = A_i \setminus A_{i+1}$ and $\{a_i\} = A_{i+1} \setminus A_i$.
By Lemma~\ref{ab-swap}, $\cH(d,\mu,A_i)$ has an $(a_i,b_i)$-swappable realization.
Recall that $\HF(d,\mu,A_i)$ is isomorphic to $\GF(d')$, where $d'$ is obtained from $d$ by removing $d_\mu$ and decrementing the degrees of all vertices in $A_i$ by one.
Thus, by the induction hypothesis applied to the degree sequence $d'$, there exists a Hamilton path in $\cH(d,\mu,A_i)$ from $S_i$ to some $(u_i;a_i,b_i)$-adjacent realization $T_i$, where $u_i \in [n] \setminus \{\mu\}$.
Let $S_{i+1} \in \cH(d,\mu,A_{i+1})$ be the realization obtained from $T_i$ by performing the 2-switch along an alternating cycle on vertices $\mu, a_i, u_i, b_i$.

For $i = k$, we construct a Hamilton path in $\cH(d,\mu,A_k)$ from $S_k$ using the induction hypothesis.
\end{proof}

Lemma~\ref{lem:final} directly implies Theorem~\ref{thm_final}

\begin{proof}[Proof of Theorem~\ref{thm_final}]
	If $\cG(d)$ has a realization that is neither the complete nor empty graph, $\cG(d)$ has an $(a,b)$-swappable realization for some vertices $a,b$, and from Lemma~\ref{lem:final} we obtain a Hamilton path in $\GF(d)$.
	
	Otherwise, observe that the only realization of $\cG(d)$ is the complete or empty graph, and thus there trivially exists a Hamilton path in $\GF(d)$.
\end{proof}
\medskip

\medskip
\textbf{Acknowledgment}

The authors acknowledge the use of large language models for assistance in improving the clarity, grammar, and overall readability of the English text.
All technical content, ideas, results, and conclusions presented in this work are the original contributions of the authors.

A preliminary version of this paper was submitted on April 26th 2026 to SVO\v{C} competition (\url{https://home.pf.jcu.cz/~svoc2026/program.html}) and was used in Bachelor thesis of the first author. During the final preparation of the journal version we learned that Baggett and Yan~\cite{BY26} found an independent proof of Brualdi's question.

\bibliographystyle{plain}
\bibliography{references}

\end{document}